\documentclass[12pt,a4paper]{article}

\usepackage{geometry}
\geometry{left=2.5cm,right=2.5cm, top=2cm,bottom=2.5cm}

\usepackage[T1]{fontenc}
\usepackage[utf8]{inputenc}
\usepackage{authblk}
\usepackage{color}

\usepackage{graphics}
\usepackage{tikz,pgf}
\usetikzlibrary{decorations.markings, decorations.pathmorphing}
\usetikzlibrary{arrows,decorations,backgrounds,scopes,plotmarks}
\usepackage{subfigure}

\usepackage[english]{babel}
\usepackage[all]{xy}
\usepackage{marvosym}
\usepackage{stmaryrd,mathrsfs,amsmath,amssymb,bm,amsfonts}
\usepackage{enumerate}
\usepackage{amsmath,amsthm,amssymb,amscd}

\newtheorem{thm}{Theorem}
\newtheorem{defn}[thm]{Definition}

\newtheorem{lem}[thm]{Lemma}

\numberwithin{equation}{subsection}

\begin{document}

\title{Planar order on vertex poset}

\author[a,b]{Xuexing Lu}

\affil[a]{\small School of Mathematical Sciences, University of Science and Technology of China, Hefei, China}
\affil[b]{Wu Wen-Tsun Key Laboratory of Mathematics, Chinese Academy of Sciences, Hefei, China}

\renewcommand\Authands{ and }
\maketitle

\begin{abstract}
A planar order is a special linear extension of the edge poset (partially ordered set) of a processive plane graph. The definition of a planar order makes sense for any finite poset and is equivalent to the one of a conjugate order. Here it was proved that there is a planar order on the vertex poset of a processive planar graph naturally induced from the planar order of its edge poset.
\end{abstract}

\text{\textit{Keywords}: edge poset, vertex poset, planar order}\\

\section{Introduction}
The notion of a processive plane graph, a special case of Joyal and Street's progressive plane graph \cite{[JS91]},  was introduce in \cite{[HLY16]} as  a graphical tool for tensor calculus in semi-groupal categories. In \cite{[HLY16]}, we gave a totally combinatorial characterization of an equivalence class of processive graphs in terms of the notions of a \textbf{POP-graph} which is a \textbf{processive graph} (a special kind of acyclic directed graph) equipped with a \textbf{planar order} (a special linear order of the edges).

However, it turns out that the notion of a planar order can be defined for a general finite poset (partially ordered set) and essentially equivalent to the one of a conjugate order \cite{[FM96]}, which is an important notion in the study of planar posets. So this raises an interesting question: for a processive graph, are there some relations between planar orders on its edges and planar orders on its vertices? In this paper, we will give a positive answer to this question by showing that any planar order of edges of a processive graph naturally induces a planar order of vertices.
\section{Processive plane graph}
\begin{defn}
A \textbf{processive plane graph} is an acyclic directed graph drawn in a plane box with the properties that: $(1)$ all edges monotonically decrease in the vertical direction; $(2)$ all sources and sinks are of degree one; and $(3)$ all sources and sinks are placed on the horizontal boundaries of the plane box.
\end{defn}
Figure $1$ shows an example.

\begin{center}
\begin{tikzpicture}[scale=0.35]
\node (v2) at (-4,3) {};
\draw[fill] (-1.5,5.5) circle [radius=0.15];
\node (v1) at (-1.5,5.5) {};
\node (v7) at (-1.5,1) {};
\node (v9) at (1.5,5.5) {};
\node (v14) at (2,1.5) {};
\node (v3) at (-3,7.5) {};
\node (v4) at (-2,7.5) {};
\node (v5) at (-0.5,7.5) {};
\node (v6) at (-4.8,7.5) {};
\node (v11) at (-4.5,-1) {};
\node (v12) at (-2,-1) {};
\node (v13) at (0,-1) {};
\node (v15) at (2,-1) {};
\node (v8) at (1,7.5) {};
\node (v10) at (2.5,7.5) {};
\node  at (-2.5,3.5) {};
\node  at (-3,5.2) {};
\node  at (-1.2,3.3) {};
\node  at (0.5,3.25) {};
\node  at (2.2,3.7) {};
\node  at (-3,1.7) {};
\draw[fill] (-4,3) circle [radius=0.15];
\draw[fill] (v1) circle [radius=0.15];
\draw[fill] (v7) circle [radius=0.15];
\draw[fill] (v9) circle [radius=0.15];
\draw[fill] (v14) circle [radius=0.15];
\draw[fill] (v1) circle [radius=0.15];
\draw[fill] (v2) circle [radius=0.15];
\draw[fill] (v3) circle [radius=0.15];
\draw[fill] (v4) circle [radius=0.15];
\draw[fill] (v5) circle [radius=0.15];
\draw[fill] (v6) circle [radius=0.15];
\draw[fill] (v8) circle [radius=0.15];
\draw[fill] (v10) circle [radius=0.15];
\draw[fill] (v11) circle [radius=0.15];
\draw[fill] (v12) circle [radius=0.15];
\draw[fill] (v13) circle [radius=0.15];
\draw[fill] (v15) circle [radius=0.15];

\draw  plot[smooth, tension=1] coordinates {(v1) (-2.5,5)  (-3.5,4) (v2)}[postaction={decorate, decoration={markings,mark=at position .5 with {\arrow[black]{stealth}}}}];
\draw  plot[smooth, tension=1] coordinates {(v1) (-2,4.5)  (-3,3.5) (v2)}[postaction={decorate, decoration={markings,mark=at position .5 with {\arrow[black]{stealth}}}}];

\draw  (-3,7.5) -- (-1.5,5.5)[postaction={decorate, decoration={markings,mark=at position .5 with {\arrow[black]{stealth}}}}];
\draw  (-2,7.5) -- (-1.5,5.5)[postaction={decorate, decoration={markings,mark=at position .5 with {\arrow[black]{stealth}}}}];
\draw  (-0.5,7.5) -- (-1.5,5.5)[postaction={decorate, decoration={markings,mark=at position .5 with {\arrow[black]{stealth}}}}];

\draw  (-4.8,7.5)-- (-4,3)[postaction={decorate, decoration={markings,mark=at position .5 with {\arrow[black]{stealth}}}}];
\draw  (-1.5,5.5)  -- (-1.5,1)[postaction={decorate, decoration={markings,mark=at position .5 with {\arrow[black]{stealth}}}}];
\draw  (-4,3) -- (-1.5,1)[postaction={decorate, decoration={markings,mark=at position .5 with {\arrow[black]{stealth}}}}];

\draw (1,7.5)--(1.5,5.5)[postaction={decorate, decoration={markings,mark=at position .5 with {\arrow[black]{stealth}}}}];
\draw  (2.5,7.5) -- (1.5,5.5)[postaction={decorate, decoration={markings,mark=at position .5 with {\arrow[black]{stealth}}}}];
\draw  (1.5,5.5) -- (-1.5,1)[postaction={decorate, decoration={markings,mark=at position .5 with {\arrow[black]{stealth}}}}];
\draw  (-4,3) -- (-4.5,-1)[postaction={decorate, decoration={markings,mark=at position .5 with {\arrow[black]{stealth}}}}];
\draw  (-1.5,1) -- (-2,-1)[postaction={decorate, decoration={markings,mark=at position .65 with {\arrow[black]{stealth}}}}];
\draw  (0,-1) -- (-1.5,1)[postaction={decorate, decoration={markings,mark=at position .5 with {\arrowreversed[black]{stealth}}}}];
\draw  (1.5,5.5) -- (2,1.5)[postaction={decorate, decoration={markings,mark=at position .5 with {\arrow[black]{stealth}}}}];
\draw  (2,1.5) -- (2,-1)[postaction={decorate, decoration={markings,mark=at position .5 with {\arrow[black]{stealth}}}}];

\node (v17) at (6.5,7.5) {};
\node (v16) at (6.5,-1) {};
\draw[fill] (v16) circle [radius=0.15];
\draw[fill] (v17) circle [radius=0.15];
\draw  (6.5,-1) -- (6.5,7.5)[postaction={decorate, decoration={markings,mark=at position .5 with {\arrowreversed[black]{stealth}}}}];
\node (v18) at (4.5,7.5) {};
\node (v19) at (4.5,-1) {};
\node (v20) at (4.5,3.25) {};
\draw[fill] (v18) circle [radius=0.15];
\draw[fill] (v19) circle [radius=0.15];

\draw[fill] (v20) circle [radius=0.15];
\draw  (4.5,-1) -- (4.5,3.25)[postaction={decorate, decoration={markings,mark=at position .5 with {\arrowreversed[black]{stealth}}}}];
\draw  (4.5,3.24) -- (4.5,7.5)[postaction={decorate, decoration={markings,mark=at position .5 with {\arrowreversed[black]{stealth}}}}];

\draw [loosely dashed] (-6.5,7.5)--(-6.5,-1);
\draw [loosely dashed] (8,7.5)--(8,-1);
\draw [loosely dashed] (-6.5,7.5)--(8,7.5);
\draw [loosely dashed] (-6.5,-1)--(8,-1);

\end{tikzpicture}

Figure $1$. A processive plane graph
\end{center}

Processive plane graphs can also be defined in terms of \textbf{processive graphs} \cite{[HLY16]} and their \textbf{boxed planar drawings}.
\begin{defn}
A processive graph is an acyclic directed graph with all its sinks and sources of degree one.
\end{defn}
A planar drawing of processive graph  $G$ is called \textbf{boxed} \cite{[JS91]} if  $G$ is drawn in a plane box with all sinks of $G$  on one horizontal boundary of the plane box and all sources of $G$  on the other horizontal boundary of the plane box. A planar drawing of an acyclic directed graph is called \textbf{upward} if all edges increases monotonically in the vertical direction (or other fixed direction). Thus a processive plane graph is exactly a boxed and upward planar drawing of a processive graph.
\begin{defn}
Two processive plane graphs are \textbf{equivalent} if they are connected by a planar isotopy such that each intermediate planar drawing is boxed (not necessarily upward).
\end{defn}
In \cite{[HLY16]}, equivalence classes of processive plane graphs are mainly used to construct a free strict tensor category on a semi-tensor scheme.
\section{Planar order and POP-graph}

In \cite{[HLY16]}, we gave a combinatorial characterization of an equivalence classes of a processive plane graph in terms of a planar order on its underlying processive graph. In this paper, we define planar order for any poset.

\begin{defn}
 A \textbf{planar order} on a poset $(X,\rightarrow)$ is a linear order $\prec$ on $X$, such that

$(P_1)$ for any $x_1,x_2\in X$, $x_1\rightarrow x_2$ implies $x_1\prec x_2$;

$(P_2)$ for any $x_1,x_2,x_3\in X$,  $x_1\prec x_2\prec x_3$ and $x_1\rightarrow x_3$ imply that either $x_1\rightarrow x_2$ or $x_2\rightarrow x_3$.

\end{defn}
$(P_1)$ says that $\prec$ is a linear extension of $\rightarrow$.

Recall that two partial orders on a set are \textbf{conjugate} if each pair of elements are comparable by exactly one of them. It is easy to see that $(P_2)$ is equivalent to the condition that if $e_1\prec e_2 \prec e_3$, then $e_1\not\rightarrow e_2$ and $e_2\not\rightarrow e_3$ imply that $e_1\not\rightarrow e_3$. Thus $(P_2)$ enables us to define a transitive binary relation: $e_1<e_2$ if and only if $e_1\prec e_2$ and $e_1\not\rightarrow e_2$; moreover, if $(P_1)$ is satisfied, then the linearity of $\prec$ implies that $<$ is a conjugate order of $\rightarrow$. So the planar order $\prec$ is a reformulation of the conjugate order of $\rightarrow$.

In a directed graph, we denote $e_1\rightarrow e_2$ if there is a directed path starting from edge $e_1$ and ending with edge $e_2$. Similarly, $v_1\rightarrow v_2$ denotes that there is a directed path starting from vertex $v_1$ and ending with vertex $v_2$. For any acyclic directed graph, its edge set and vertex set are posets with the relation $e_1\rightarrow e_2$ and $v_1\rightarrow v_2$. We call them \textbf{edge poset} and \textbf{vertex poset} of the acyclic directed graph, respectively.

The following is a key notion in \cite{[HLY16]}.

\begin{defn}
A \textbf{planarly ordered processive graph} or \textbf{POP-graph} , is a processive graph $G$ together with a planar order $\prec$ on its edge poset $(E(G),\rightarrow)$.
\end{defn}
We simply denote  a POP-graph as $(G,\prec)$; see Fig $2$ for an example.
\begin{center}

\begin{tikzpicture}[scale=0.4]
\node (v2) at (-4,3) {};
\draw[fill] (-1.5,5.5) circle [radius=0.11];
\node (v1) at (-1.5,5.5) {};
\node (v7) at (-1.5,1) {};
\node (v9) at (1.5,5.5) {};
\node (v14) at (2,1.5) {};
\node (v3) at (-3,7.5) {};
\node (v4) at (-2,7.5) {};
\node (v5) at (-0.5,7.5) {};
\node (v6) at (-4.8,7.5) {};
\node (v11) at (-4.5,-1) {};
\node (v12) at (-2,-1) {};
\node (v13) at (0,-1) {};
\node (v15) at (2,-1) {};
\node (v8) at (1,7.5) {};
\node (v10) at (2.5,7.5) {};

\node [scale=0.8]  at (-2.5,3.5) {$6$};
\node[scale=0.8]  at (-3,5.2) {$5$};
\node[scale=0.8]  at (-1.2,3.3) {$9$};
\node[scale=0.8]  at (0.5,3.25) {$12$};
\node[scale=0.8]  at (2.2,3.7) {$15$};
\node[scale=0.8]  at (-3,1.7) {$8$};

\node[scale=0.8] [above] at (-3,7.5) {};
\node[scale=0.8] [above]  at (-2,7.5) {};
\node[scale=0.8] [above] at (-0.5,7.5) {};
\node [scale=0.8][above]  at (-4.8,7.5) {};
\node [scale=0.8][below] at (-4.5,-1) {};
\node [scale=0.8][below]at (-2,-1) {};
\node [scale=0.8][below] at (0,-1) {};
\node [scale=0.8][below] at (2,-1) {};
\node [scale=0.8][above]  at (1,7.5) {};
\node [scale=0.8][above]  at (2.5,7.5) {};

\node  at (-2.5,3.5) {};
\node  at (-3,5.2) {};
\node  at (-1.2,3.3) {};
\node  at (0.5,3.25) {};
\node  at (2.2,3.7) {};
\node  at (-3,1.7) {};
\draw[fill] (-4,3) circle [radius=0.11];
\draw[fill] (v1) circle [radius=0.11];
\draw[fill] (v7) circle [radius=0.11];
\draw[fill] (v9) circle [radius=0.11];
\draw[fill] (v14) circle [radius=0.11];
\draw[fill] (v1) circle [radius=0.11];
\draw[fill] (v2) circle [radius=0.11];
\draw[fill] (v3) circle [radius=0.11];
\draw[fill] (v4) circle [radius=0.11];
\draw[fill] (v5) circle [radius=0.11];
\draw[fill] (v6) circle [radius=0.11];
\draw[fill] (v8) circle [radius=0.11];
\draw[fill] (v10) circle [radius=0.11];
\draw[fill] (v11) circle [radius=0.11];
\draw[fill] (v12) circle [radius=0.11];
\draw[fill] (v13) circle [radius=0.11];
\draw[fill] (v15) circle [radius=0.11];

\draw  plot[smooth, tension=1] coordinates {(v1) (-2.5,5)  (-3.5,4) (v2)}[postaction={decorate, decoration={markings,mark=at position .5 with {\arrow[black]{stealth}}}}];
\draw  plot[smooth, tension=1] coordinates {(v1) (-2,4.5)  (-3,3.5) (v2)}[postaction={decorate, decoration={markings,mark=at position .5 with {\arrow[black]{stealth}}}}];

\draw  (-3,7.5) -- (-1.5,5.5)[postaction={decorate, decoration={markings,mark=at position .5 with {\arrow[black]{stealth}}}}];
\draw  (-2,7.5) -- (-1.5,5.5)[postaction={decorate, decoration={markings,mark=at position .5 with {\arrow[black]{stealth}}}}];
\draw  (-0.5,7.5) -- (-1.5,5.5)[postaction={decorate, decoration={markings,mark=at position .5 with {\arrow[black]{stealth}}}}];

\draw  (-4.8,7.5)-- (-4,3)[postaction={decorate, decoration={markings,mark=at position .5 with {\arrow[black]{stealth}}}}];
\draw  (-1.5,5.5)  -- (-1.5,1)[postaction={decorate, decoration={markings,mark=at position .5 with {\arrow[black]{stealth}}}}];
\draw  (-4,3) -- (-1.5,1)[postaction={decorate, decoration={markings,mark=at position .5 with {\arrow[black]{stealth}}}}];

\draw (1,7.5)--(1.5,5.5)[postaction={decorate, decoration={markings,mark=at position .5 with {\arrow[black]{stealth}}}}];
\draw  (2.5,7.5) -- (1.5,5.5)[postaction={decorate, decoration={markings,mark=at position .5 with {\arrow[black]{stealth}}}}];
\draw  (1.5,5.5) -- (-1.5,1)[postaction={decorate, decoration={markings,mark=at position .5 with {\arrow[black]{stealth}}}}];
\draw  (-4,3) -- (-4.5,-1)[postaction={decorate, decoration={markings,mark=at position .5 with {\arrow[black]{stealth}}}}];
\draw  (-1.5,1) -- (-2,-1)[postaction={decorate, decoration={markings,mark=at position .65 with {\arrow[black]{stealth}}}}];
\draw  (0,-1) -- (-1.5,1)[postaction={decorate, decoration={markings,mark=at position .5 with {\arrowreversed[black]{stealth}}}}];
\draw  (1.5,5.5) -- (2,1.5)[postaction={decorate, decoration={markings,mark=at position .5 with {\arrow[black]{stealth}}}}];
\draw  (2,1.5) -- (2,-1)[postaction={decorate, decoration={markings,mark=at position .5 with {\arrow[black]{stealth}}}}];

\node (v17) at (6.5,7.5) {};
\node [right][scale=0.8] at (6.5,3.5) {$19$};
\node (v16) at (6.5,-1) {};
\draw[fill] (v16) circle [radius=0.11];
\draw[fill] (v17) circle [radius=0.11];
\draw  (6.5,-1) -- (6.5,7.5)[postaction={decorate, decoration={markings,mark=at position .5 with {\arrowreversed[black]{stealth}}}}];
\node (v18) at (4.5,7.5) {};
\node [above][scale=0.8] at (4.5,7.5) {};
\node (v19) at (4.5,-1) {};
\node [below][scale=0.8] at (4.5,-1) {};
\node (v20) at (4.5,3.25) {};
\draw[fill] (v18) circle [radius=0.11];
\draw[fill] (v19) circle [radius=0.11];

\draw[fill] (v20) circle [radius=0.11];
\draw  (4.5,-1) -- (4.5,3.25)[postaction={decorate, decoration={markings,mark=at position .5 with {\arrowreversed[black]{stealth}}}}];
\draw  (4.5,3.24) -- (4.5,7.5)[postaction={decorate, decoration={markings,mark=at position .5 with {\arrowreversed[black]{stealth}}}}];

\node [scale=0.8]at (-5,6.5) {$1$};
\node [scale=0.8]at (-3,6.5) {$2$};
\node [scale=0.8]at (-1.5,7) {$3$};
\node [scale=0.8]at (-0.5,6.5) {$4$};
\node [scale=0.8]at (0.6,6.5) {$10$};
\node [scale=0.8]at (2.6,6.5) {$11$};
\node [scale=0.8]at (5,6) {$17$};
\node [scale=0.8]at (-4.7,0.5) {$7$};
\node [scale=0.8]at (-2.5,0) {$13$};
\node [scale=0.8]at (0,0) {$14$};
\node [scale=0.8]at (2.5,0) {$16$};
\node [scale=0.8]at (5,0.5) {$18$};
\end{tikzpicture}

Figure $2$. A POP-graph
\end{center}

A basic result is the following.
\begin{thm}[\cite{[HLY16]}]\label{M}
There is a bijection between POP-graphs and equivalence classes of processive plane graphs.
\end{thm}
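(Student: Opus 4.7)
My plan is to construct mutually inverse maps between POP-graphs and equivalence classes of processive plane graphs, and then check that each map respects the equivalence relation.

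For the forward map, starting from a processive plane graph $\Gamma$ with underlying processive graph $G$, I would define a linear order $\prec_\Gamma$ on $E(G)$ by reading the drawing in ``upper-left first'' order. Concretely, set $e_1 \prec_\Gamma e_2$ whenever $e_1 \rightarrow e_2$; for incomparable pairs use the planar embedding to order them left-to-right along any horizontal line that meets both, and if they are vertically disjoint break the tie by placing the edge with the higher endpoint first. The first task is to verify that this gives a well-defined total order on $E(G)$, independent of the auxiliary horizontal line chosen. Axiom $(P_1)$ is built in. For $(P_2)$, suppose $e_1 \prec_\Gamma e_2 \prec_\Gamma e_3$ with $e_1 \rightarrow e_3$: the directed path from $e_1$ to $e_3$ traces a vertically monotone arc $\gamma$ in $\Gamma$, and the planarity of $\Gamma$ combined with $e_2$ lying strictly between $e_1$ and $e_3$ in the reading order forces $e_2$ to share a vertex with $\gamma$. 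A directed segment of $\gamma$ then concatenates with $e_2$ on one side or the other, yielding $e_1 \rightarrow e_2$ or $e_2 \rightarrow e_3$.

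For the inverse map, given a POP-graph $(G,\prec)$, restricting $\prec$ to the incoming and to the outgoing edges at each vertex produces two linear orderings which together specify a rotation system on $G$. I would realize this rotation system as a boxed upward planar drawing inductively along $\prec$: place the global source and sink edges on the horizontal boundaries in the order dictated by $\prec$, then insert the internal vertices at successively lower heights and route each edge as a monotone arc so that the prescribed local rotations are realized without crossings. The final step is to confirm that the planar order induced by this drawing recovers the original $\prec$.

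The main obstacle I expect is the realizability claim in the inverse direction: that any linear order on $E(G)$ satisfying $(P_1)$ and $(P_2)$ produces a rotation system admitting a boxed upward planar embedding. Axiom $(P_2)$ should do exactly the work of ruling out forbidden crossings, and I would prove realizability by induction on $|E(G)|$, removing one $\prec$-minimal edge at a time and invoking the inductive hypothesis to place that edge on the left side of the partially drawn figure. Once this structural lemma is in hand, well-definedness on equivalence classes is routine: a planar isotopy through boxed intermediate drawings cannot alter the induced planar order, since it continuously preserves left-to-right position on generic horizontal lines, and conversely any two processive plane graphs yielding the same POP-graph can be interpolated by straightening one onto the other through such an isotopy.
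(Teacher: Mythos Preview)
The paper does not prove this theorem at all: it is quoted as a result of \cite{[HLY16]}, and the only content following the statement is the remark that Figure~2 corresponds to Figure~1. There is therefore no proof here against which to compare your proposal.

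As for the proposal on its own terms, the overall two-map architecture is the natural one, and your inverse direction (realise $(G,\prec)$ by induction on $|E(G)|$, peeling off a $\prec$-minimal edge and placing it on the left) is essentially the strategy one finds in \cite{[HLY16]}. Your forward map, however, has a concrete error in the tie-breaking rule for vertically disjoint incomparable edges. In the paper's running example, edge~$7$ (with vertical span $[-1,3]$) and edge~$10$ (with vertical span $[5.5,7.5]$) are incomparable in the edge poset and vertically disjoint; your rule ``higher endpoint first'' would put $10\prec 7$, whereas the planar order in Figure~2 has $7\prec 10$. The correct reading is not a naive top-down scan: one standard way to get it is to use the left-of relation determined by the upward drawing (equivalently, extend every edge monotonically to both horizontal boundaries of the box and read the resulting arcs left to right), and then show that this conjugate order combines with $\rightarrow$ to give a linear order satisfying $(P_2)$. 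That step is where the real content of the forward direction lies, and your sketch underestimates it.
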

The POP-graph in Fig $2$ corresponds to the processive plane graph in Fig $1$.

\section{Planar order on vertices}
In this section, we will prove our main result. Before that we need some preliminaries.

From now on, we fix be a POP-graph $(G,\prec)$. For a vertex $v$ of $(G,\prec)$,  the set $I(v)$ of incoming edges  and the set $O(v)$ of outgoing edges  are linearly ordered by $\prec$. We introduce some notations when $I(v)$ or $O(v)$ are not empty:
$$i^-(v)=min\ I(v),$$
$$i^+(v)=max\  I(v),$$
$$o^-(v)=min\ O(v),$$
$$o^+(v)=max\  O(v).$$

The following lemma is a result first proved in \cite{[LY16]}.
\begin{lem}\label{P_2 to U_2}
Let $v$ be a vertex of $(G,\prec)$. If the degree of $v$ is not one, then $o^-(v)=i^+(v)+1$ under the linear order $\prec$.
\end{lem}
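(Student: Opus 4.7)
The plan is to show that no edge $e$ can lie strictly between $i^+(v)$ and $o^-(v)$ in $\prec$. Since $v$ has degree other than one and all sources and sinks in a processive graph have degree one, $v$ is neither a source nor a sink, so both $I(v)$ and $O(v)$ are nonempty, making $i^+(v)$ and $o^-(v)$ well defined. Concatenating $i^+(v)$ and $o^-(v)$ at $v$ is a directed path of edges, so $i^+(v)\rightarrow o^-(v)$ in the edge poset, and $(P_1)$ gives $i^+(v)\prec o^-(v)$. Hence it suffices to rule out any $e\in E(G)$ with $i^+(v)\prec e\prec o^-(v)$.

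Suppose, for contradiction, that such an $e$ exists. Applying $(P_2)$ to the triple $i^+(v)\prec e\prec o^-(v)$ together with the relation $i^+(v)\rightarrow o^-(v)$ forces one of two alternatives: either $i^+(v)\rightarrow e$, or $e\rightarrow o^-(v)$. I will dispose of each case by tracking how the guaranteed directed edge-path must pass through the vertex $v$.

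In the first case, a directed edge-path from $i^+(v)$ to $e$ must, after $i^+(v)$, continue through an edge emanating from the terminal vertex $v$ of $i^+(v)$; call this next edge $o$. Either $o=e$, which would make $e$ an outgoing edge at $v$ and so force $o^-(v)\preceq e$, contradicting $e\prec o^-(v)$; or $o\neq e$, in which case $o\in O(v)$ gives $o^-(v)\preceq o$ and $o\rightarrow e$ together with $(P_1)$ gives $o\prec e$, so $o^-(v)\preceq o\prec e\prec o^-(v)$, again a contradiction. The second case is symmetric: a directed edge-path from $e$ to $o^-(v)$ must arrive at $v$ through an incoming edge $i\in I(v)$ (possibly $i=e$), and then either $e=i\preceq i^+(v)$ contradicts $i^+(v)\prec e$ directly, or $e\rightarrow i$ gives $e\prec i\preceq i^+(v)$, the same contradiction.

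The hard part is conceptually small but requires care: one must correctly interpret the edge-poset relation $\rightarrow$ in terms of concrete edge-paths and correctly identify the next (respectively previous) edge along such a path as an element of $O(v)$ (respectively $I(v)$), including the degenerate subcase in which $e$ itself is that next/previous edge. Once this is handled, both cases close immediately, and from the nonexistence of such an $e$ together with $i^+(v)\prec o^-(v)$ we conclude $o^-(v)=i^+(v)+1$ under $\prec$.
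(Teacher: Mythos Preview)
Your proof is correct and follows essentially the same approach as the paper: both argue by contradiction, apply $(P_2)$ to the triple $i^+(v)\prec e\prec o^-(v)$, and in each resulting case trace the directed edge-path through an element of $O(v)$ (respectively $I(v)$) to reach a contradiction with the extremality of $o^-(v)$ or $i^+(v)$. Your version is slightly more explicit in separating the degenerate subcase $o=e$ from $o\rightarrow e$, but the underlying argument is identical.
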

\begin{proof}
Notice that $G$ is a processive graph, then $deg(v)\neq 1$ implies that $I(v)\neq\emptyset$ and $O(v)\neq\emptyset$. Thus both $i^+(v)$ and $o^-(v)$ exist. Now we prove $o^-(v)=i^+(v)+1$ by contradiction. Suppose there exists an edge $e$, such that $i^+(v)\prec e\prec o^-(v)$. Since $i^+(v)\rightarrow o^-(v)$, then by $(P_2)$ we have $i^+(v)\rightarrow e$ or $e\rightarrow o^-(v)$. If $i^+(v)\rightarrow e$, then there must exists an edge $e'\in O(v)-\{o^-(v)\}$, such that $e'\rightarrow e$ or $e'=e$. Thus $e'\preceq e$, which contradicts $e\prec o^-(v)$. Otherwise, $e\rightarrow o^-(v)$, then there must exist an edge $e''\in I(v)-\{i^+(v)\}$ such that $e\rightarrow e''$ or $e''=e$. Then $e\preceq e''$, which contradicts $i^+(v)\prec e$.
\end{proof}
Lemma \ref{P_2 to U_2} shows that for any vertex $v$, $\overline{E(v)}=\overline{I(v)}\sqcup \overline{O(v)}$, where $E(v)$ is the set of incident edges of $v$ and $\overline{X}$ denotes the interval of subset $X$ in a poset. Due to Lemma \ref{P_2 to U_2},  we can define a linear order $\prec_V$ on the vertex set $V(G)$.
For any two  different vertices $v_1, v_2$ of $G$,   $v_1\prec_V v_2$  if and only if one of the following conditions is satisfied:

$(1)$ $I^+(v_1)\prec I^+(v_2)$, $(2)$ $I^+(v_1)\prec O^-(v_2)$, $(3)$ $O^-(v_1) \preceq I^+(v_2)$, $(4)$ $O^-(v_1)\prec O^-(v_2)$.\\

\noindent We write $v_1\preceq_V v_2$ if $v_1=v_2$ or $v_1\prec_V v_2$. The following Theorem is our main result.
\begin{thm}\label{main}
For any POP-graph $(G,\prec)$, $\preceq_V$ defines a planar order on the vertex poset $(V(G),\rightarrow)$.
\end{thm}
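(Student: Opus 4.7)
The theorem splits into three tasks: that $\prec_V$ is a strict linear order on $V(G)$, that it satisfies $(P_1)$, and that it satisfies $(P_2)$. The first two are routine checks; the real content is $(P_2)$.

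First I would verify that $\prec_V$ is a strict total order by a case analysis on whether each of $v_1,v_2$ is a source, sink, or internal vertex. Lemma~\ref{P_2 to U_2} (the identity $o^-(v)=i^+(v)+1$) makes whichever of conditions $(1)$--$(4)$ are applicable pairwise equivalent, so any two distinct vertices are comparable by exactly one direction. For $(P_1)$, a relation $v_1\rightarrow v_2$ provides some $e\in O(v_1)$ and $f\in I(v_2)$ with $e\rightarrow f$ in the edge poset; edge-level $(P_1)$ then gives $o^-(v_1)\preceq e\preceq f\preceq i^+(v_2)$, which is condition $(3)$ and hence $v_1\prec_V v_2$ (the source/sink corners use conditions $(2)$ or $(4)$ verbatim).

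For $(P_2)$, given $v_1\prec_V v_2\prec_V v_3$ with $v_1\rightarrow v_3$, I would induct on the length of a shortest directed path $v_1\rightarrow v_3$. The inductive step is mechanical: take the first intermediate vertex $u_1$ on such a path; by $(P_1)$ we have $v_1\prec_V u_1\prec_V v_3$; comparing $u_1$ with $v_2$, the case $u_1=v_2$ is immediate, the case $v_2\prec_V u_1$ reduces to the base case applied to the triple $v_1\prec_V v_2\prec_V u_1$ with the direct edge $v_1\rightarrow u_1$, and the case $u_1\prec_V v_2$ invokes the inductive hypothesis on the strictly shorter path $u_1\rightarrow v_3$ with the triple $u_1\prec_V v_2\prec_V v_3$. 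Each branch concludes $v_1\rightarrow v_2$ or $v_2\rightarrow v_3$.

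The hard part, which I expect to carry the proof, is the base case of a single edge $e\in O(v_1)\cap I(v_3)$. I would argue by contradiction from $v_1\not\rightarrow v_2$ and $v_2\not\rightarrow v_3$: the conditions defining $\prec_V$ together with the assumption that neither arrow holds force the strict chain $o^-(v_1)\prec i^+(v_2)\prec o^-(v_2)\prec i^+(v_3)$, and Lemma~\ref{P_2 to U_2} forbids any edge strictly between the consecutive pair $i^+(v_2)$ and $o^-(v_2)$, so $e\prec i^+(v_2)$ or $e\succ o^-(v_2)$. In the first subcase set $a:=i^+(v_1)$ and $c:=o^-(v_3)$ (with $a:=e$ when $v_1$ is a source; the case $v_3$ a sink is automatically ruled out since then $i^+(v_3)=e$ would contradict $i^+(v_2)\prec i^+(v_3)$). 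Concatenation through $v_1$ and then through $v_3$ gives $a\rightarrow c$ in the edge poset, while the linear order yields $a\prec i^+(v_2)\prec c$; edge-level $(P_2)$ then forces $a\rightarrow i^+(v_2)$ (whence $v_1\rightarrow v_2$) or $i^+(v_2)\rightarrow c$ (whence $v_2\rightarrow v_3$), contradicting the standing assumption. The second subcase is handled symmetrically via the triple $a\prec o^-(v_2)\prec c$. Cleanly dispatching the source/sink corners is the main bookkeeping, but the content of the argument is this single application of edge-level $(P_2)$.
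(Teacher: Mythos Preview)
Your argument is correct, but it is more elaborate than necessary and differs from the paper's in one essential respect: you reduce $(P_2)$ by induction on the length of a directed path $v_1\to v_3$, whereas the paper handles an arbitrary path in a single stroke. The point you are missing is that for \emph{any} directed path from $v_1$ to $v_3$ (not just a single edge) one immediately has $i^+(v_1)\rightarrow o^-(v_3)$ in the edge poset (replacing $i^+(v_1)$ by $o^-(v_1)$ when $v_1$ is a source, and $o^-(v_3)$ by $i^+(v_3)$ when $v_3$ is a sink). With that observation, letting $e$ be whichever of $i^+(v_2),o^-(v_2)$ exists, the relations $v_1\prec_V v_2\prec_V v_3$ sandwich $e$ strictly between the two endpoint edges, and one application of edge-level $(P_2)$ finishes the job directly, without contradiction and without induction. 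This is exactly the paper's proof, organised as four cases according to whether $v_1$ is a source and whether $v_3$ is a sink.

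Your base case is in fact this same application of $(P_2)$, only restricted to length-one paths and wrapped in a contradiction argument; the inductive step then rebuilds what the single edge-poset relation $i^+(v_1)\rightarrow o^-(v_3)$ gives for free. So your route is sound but carries redundant machinery. One small point of bookkeeping you flag but do not spell out: when $v_2$ itself is a source or a sink, only one of $i^+(v_2),o^-(v_2)$ exists, so your ``strict chain'' collapses; the fix is simply to run the argument with whichever of the two is defined, exactly as the paper does by writing ``let $e=i^+(v_2)$ or $o^-(v_2)$''. On the positive side, you explicitly check that $\prec_V$ is a strict linear order, which the paper leaves implicit in Lemma~\ref{P_2 to U_2}.
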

\begin{proof}$(1)$ $\prec_V$ satisfies $(P_1)$.
If $v_1\rightarrow v_2$, then there exist $e_i\in E(G)$ $(1\leq i\leq n)$ such that $v_1=s(e_1)$, $v_2=t(e_n)$ and $t(e_i)=s(e_{i+1})$ for $(1\leq i\leq n-1)$, which implies that $o^-(v)\preceq e_1\preceq e_n\preceq i^+(v_2)$. Thus $o^-(v_1)\preceq i^+(v_2)$, then by definition of $\prec_V$, we have $v_1\prec_V v_2$.

$(2)$ $\prec_V $ satisfies $(P_2)$.
Suppose $v_1\prec_V v_2\prec_V v_3$ and $v_1\rightarrow v_3$, then $o^-(v_1)$ and $i^+(v_3)$ exist and $o^-(v_1)\prec i^+(v_3)$.
We have four cases:

\textbf{Case 1:} $v_1$ is a source and $v_3$ is a sink. In this case, notice that $G$ is processive, then by Definition $2$, $\{o^-(v_1)\}=O(v_1)$ and $\{i^+(v_3)\}=I(v_3)$. So $v_1\rightarrow v_3$ implies that $o^-(v_1)\rightarrow i^+(v_3)$. Let $e=i^+(v_2)$ or $o^-(v_2)$, then $v_1\prec_V v_2\prec_V v_3$ implies that $o^-(v_1)\prec e\prec i^+(v_3)$ or $o^-(v_1)= e$ or $e= i^+(v_3)$. In the first case, by $(P_2)$, we have $o^-(v_1)\rightarrow e$ or $ e\rightarrow i^+(v_3)$, which implies that $v_1\rightarrow v_2$ or $v_2\rightarrow v_3$. In the second case, we have $v_1\rightarrow v_2$, and in the third case, we have $v_2\rightarrow v_3$.

\textbf{Case 2:} $v_1$ is not a source and $v_3$ is a sink. In this case, $i^+(v_1)$ exists and by Definition $2$, $\{i^+(v_3)\}=I(v_3)$. So $v_1\rightarrow v_3$ implies that $i^+(v_1)\rightarrow i^+(v_3)$. Let $e=i^+(v_2)$ or $o^-(v_2)$, then $v_1\prec_V v_2\prec_V v_3$ implies that $i^+(v_1)\prec e\prec i^+(v_3)$ or  $e= i^+(v_3)$. In the first case, by $(P_2)$, we have $i^+(v_1)\rightarrow e$ or $ e\rightarrow i^+(v_3)$, which implies that $v_1\rightarrow v_2$ or $v_2\rightarrow v_3$. In the second case, we have $v_2\rightarrow v_3$.

\textbf{Case 3:} $v_1$ is  a source and $v_3$ is not a sink. This case is similar to case $(2)$.

\textbf{Case 4:} $v_1$ is not a source and $v_3$ is not a sink. In this case, both $i^+(v_1)$ and $o^-(v_2)$ exist and $v_1\rightarrow v_3$ implies that $i^+(v_1)\rightarrow o^-(v_3)$. Let $e=i^+(v_2)$ or $o^-(v_2)$, then $v_1\prec_V v_2\prec_V v_3$ implies that $i^+(v_1)\prec e\prec i^+(v_3)$. By $(P_2)$, we have $i^+(v_1)\rightarrow e$ or $ e\rightarrow o^-(v_3)$, which implies that $v_1\rightarrow v_2$ or $v_2\rightarrow v_3$.
\end{proof}
Figure $3$ shows the planar order on the vertex poset of the POP-graph in Fig $2$.
\begin{center}

\begin{tikzpicture}[scale=0.4]
\node (v2) at (-4,3) {};
\draw[fill] (-1.5,5.5) circle [radius=0.11];
\node (v1) at (-1.5,5.5) {};
\node (v7) at (-1.5,1) {};
\node (v9) at (1.5,5.5) {};
\node (v14) at (2,1.5) {};
\node (v3) at (-3,7.5) {};
\node (v4) at (-2,7.5) {};
\node (v5) at (-0.5,7.5) {};
\node (v6) at (-4.8,7.5) {};
\node (v11) at (-4.5,-1) {};
\node (v12) at (-2,-1) {};
\node (v13) at (0,-1) {};
\node (v15) at (2,-1) {};
\node (v8) at (1,7.5) {};
\node (v10) at (2.5,7.5) {};

\node [scale=0.8]  at (-2.5,3.5) {};
\node[scale=0.8]  at (-3,5.2) {};
\node[scale=0.8]  at (-1.2,3.3) {};
\node[scale=0.8]  at (0.5,3.25) {};
\node[scale=0.8]  at (2.2,3.7) {};
\node[scale=0.8]  at (-3,1.7) {};

\node[scale=0.8] [above] at (-3,7.5) {$2$};
\node[scale=0.8] [above]  at (-2,7.5) {$3$};
\node[scale=0.8] [above] at (-0.5,7.5) {$4$};
\node [scale=0.8][above]  at (-4.8,7.5) {$1$};
\node [scale=0.8][below] at (-4.5,-1) {$7$};
\node [scale=0.8][below]at (-2,-1) {$12$};
\node [scale=0.8][below] at (0,-1) {$13$};
\node [scale=0.8][below] at (2,-1) {$15$};
\node [scale=0.8][above]  at (1,7.5) {$8$};
\node [scale=0.8][above]  at (2.5,7.5) {$9$};

\node  at (-2.5,3.5) {};
\node  at (-3,5.2) {};
\node  at (-1.2,3.3) {};
\node  at (0.5,3.25) {};
\node  at (2.2,3.7) {};
\node  at (-3,1.7) {};
\draw[fill] (-4,3) circle [radius=0.11];
\draw[fill] (v1) circle [radius=0.11];
\draw[fill] (v7) circle [radius=0.11];
\draw[fill] (v9) circle [radius=0.11];
\draw[fill] (v14) circle [radius=0.11];
\draw[fill] (v1) circle [radius=0.11];
\draw[fill] (v2) circle [radius=0.11];
\draw[fill] (v3) circle [radius=0.11];
\draw[fill] (v4) circle [radius=0.11];
\draw[fill] (v5) circle [radius=0.11];
\draw[fill] (v6) circle [radius=0.11];
\draw[fill] (v8) circle [radius=0.11];
\draw[fill] (v10) circle [radius=0.11];
\draw[fill] (v11) circle [radius=0.11];
\draw[fill] (v12) circle [radius=0.11];
\draw[fill] (v13) circle [radius=0.11];
\draw[fill] (v15) circle [radius=0.11];

\draw  plot[smooth, tension=1] coordinates {(v1) (-2.5,5)  (-3.5,4) (v2)}[postaction={decorate, decoration={markings,mark=at position .5 with {\arrow[black]{stealth}}}}];
\draw  plot[smooth, tension=1] coordinates {(v1) (-2,4.5)  (-3,3.5) (v2)}[postaction={decorate, decoration={markings,mark=at position .5 with {\arrow[black]{stealth}}}}];

\draw  (-3,7.5) -- (-1.5,5.5)[postaction={decorate, decoration={markings,mark=at position .5 with {\arrow[black]{stealth}}}}];
\draw  (-2,7.5) -- (-1.5,5.5)[postaction={decorate, decoration={markings,mark=at position .5 with {\arrow[black]{stealth}}}}];
\draw  (-0.5,7.5) -- (-1.5,5.5)[postaction={decorate, decoration={markings,mark=at position .5 with {\arrow[black]{stealth}}}}];

\draw  (-4.8,7.5)-- (-4,3)[postaction={decorate, decoration={markings,mark=at position .5 with {\arrow[black]{stealth}}}}];
\draw  (-1.5,5.5)  -- (-1.5,1)[postaction={decorate, decoration={markings,mark=at position .5 with {\arrow[black]{stealth}}}}];
\draw  (-4,3) -- (-1.5,1)[postaction={decorate, decoration={markings,mark=at position .5 with {\arrow[black]{stealth}}}}];

\draw (1,7.5)--(1.5,5.5)[postaction={decorate, decoration={markings,mark=at position .5 with {\arrow[black]{stealth}}}}];
\draw  (2.5,7.5) -- (1.5,5.5)[postaction={decorate, decoration={markings,mark=at position .5 with {\arrow[black]{stealth}}}}];
\draw  (1.5,5.5) -- (-1.5,1)[postaction={decorate, decoration={markings,mark=at position .5 with {\arrow[black]{stealth}}}}];
\draw  (-4,3) -- (-4.5,-1)[postaction={decorate, decoration={markings,mark=at position .5 with {\arrow[black]{stealth}}}}];
\draw  (-1.5,1) -- (-2,-1)[postaction={decorate, decoration={markings,mark=at position .65 with {\arrow[black]{stealth}}}}];
\draw  (0,-1) -- (-1.5,1)[postaction={decorate, decoration={markings,mark=at position .5 with {\arrowreversed[black]{stealth}}}}];
\draw  (1.5,5.5) -- (2,1.5)[postaction={decorate, decoration={markings,mark=at position .5 with {\arrow[black]{stealth}}}}];
\draw  (2,1.5) -- (2,-1)[postaction={decorate, decoration={markings,mark=at position .5 with {\arrow[black]{stealth}}}}];

\node (v17) at (6.5,7.5) {};
\node [right][scale=0.8] at (6.5,3.5) {};
\node (v16) at (6.5,-1) {};
\draw[fill] (v16) circle [radius=0.11];
\draw[fill] (v17) circle [radius=0.11];
\draw  (6.5,-1) -- (6.5,7.5)[postaction={decorate, decoration={markings,mark=at position .5 with {\arrowreversed[black]{stealth}}}}];
\node (v18) at (4.5,7.5) {};
\node [above][scale=0.8] at (4.5,7.5) {$16$};
\node (v19) at (4.5,-1) {};
\node [below][scale=0.8] at (4.5,-1) {$18$};
\node (v20) at (4.5,3.25) {};
\draw[fill] (v18) circle [radius=0.11];
\draw[fill] (v19) circle [radius=0.11];

\draw[fill] (v20) circle [radius=0.11];
\draw  (4.5,-1) -- (4.5,3.25)[postaction={decorate, decoration={markings,mark=at position .5 with {\arrowreversed[black]{stealth}}}}];
\draw  (4.5,3.24) -- (4.5,7.5)[postaction={decorate, decoration={markings,mark=at position .5 with {\arrowreversed[black]{stealth}}}}];

\node [scale=0.8]at (-2.2,5.6) {$5$};
\node [scale=0.8]at (-4.7,3) {$6$};
\node [scale=0.8]at (-2.2,1) {$11$};
\node [scale=0.8]at (0.8,5.5) {$10$};
\node [scale=0.8]at (1.3,1.5) {$14$};
\node [scale=0.8]at (3.8,3.25) {$17$};
\node [scale=0.8, above]at (6.5,7.5) {$19$};
\node [scale=0.8, below]at (6.5,-1) {$20$};
\end{tikzpicture}

Figure $3$. Induced planar order on vertices.
\end{center}

Theorem \ref{main} shows that for any processive graph, each conjugate order of its edge poset induces a conjugate order of its vertex poset. However, in general, the converse is not true. Therefore, together with Theorem 2.1,  Theorem 3.1 demonstrates that edge poset is more effective tool than vertex poset in the study of upward planarity. It is worth to mention that  Fraysseix and Mendez, in a different but essentially equivalent  context,  also  showed a similar judgement (in their final remark of  \cite{[FM96]}). In our subsequent work, we will show that for a transitive reduced processive graph (directed covering graph of a poset), a planar order on its vertex set can naturally induce a planar order on its edge set, which is essentially related the work in  \cite{[FM96]}.

\section*{Acknowlegement}
This work is supported by "the Fundamental Research Funds for the Central Universities".

\par


\textbf{Xuexing Lu}\hfill \\  Email: xxlu@mail.ustc.edu.cn

\end{document}